\documentclass[12pt, a4paper]{amsart}
\usepackage{amssymb,amscd, hyperref, epsf}

\begin{document}

\let\kappa=\varkappa
\let\epsilon=\varepsilon
\let\phi=\varphi
\let\p\partial

\def\Z{\mathbb Z}
\def\R{\mathbb R}
\def\C{\mathbb C}
\def\Q{\mathbb Q}
\def\P{\mathbb P}
\def\N{\mathbb N}
\def\L{\mathbb L}
\def\HH{\mathrm{H}}
\def\ss{X}

\def\conj{\overline}
\def\Beta{\mathrm{B}}
\def\const{\mathrm{const}}
\def\ov{\overline}
\def\wt{\widetilde}
\def\wh{\widehat}

\renewcommand{\Im}{\mathop{\mathrm{Im}}\nolimits}
\renewcommand{\Re}{\mathop{\mathrm{Re}}\nolimits}
\newcommand{\codim}{\mathop{\mathrm{codim}}\nolimits}
\newcommand{\id}{\mathop{\mathrm{id}}\nolimits}
\newcommand{\Aut}{\mathop{\mathrm{Aut}}\nolimits}
\newcommand{\lk}{\mathop{\mathrm{lk}}\nolimits}
\newcommand{\sign}{\mathop{\mathrm{sign}}\nolimits}
\newcommand{\pt}{\mathop{\mathrm{pt}}\nolimits}
\newcommand{\rk}{\mathop{\mathrm{rk}}\nolimits}
\newcommand{\SKY}{\mathop{\mathrm{SKY}}\nolimits}
\newcommand{\st}{\mathop{\mathrm{st}}\nolimits}
\def\Jet{{\mathcal J}}
\def\FC{{\mathrm{FCrit}}}
\def\sS{{\mathcal S}}
\def\lcan{\lambda_{\mathrm{can}}}
\def\ocan{\omega_{\mathrm{can}}}

\renewcommand{\mod}{\mathrel{\mathrm{mod}}}
\def\ds{\displaystyle}

\newtheorem{mainthm}{Theorem}
\newtheorem{thm}{Theorem}
\newtheorem{construction}[thm]{Construction}
\newtheorem{lem}[thm]{Lemma}
\newtheorem{prop}[thm]{Proposition}
\newtheorem{cor}[thm]{Corollary}

\theoremstyle{definition}
\newtheorem{exm}[thm]{Example}
\newtheorem{rem}{Remark}
\newtheorem{defin}[thm]{Definition}
\newtheorem{remark}[thm]{Remark}
\renewcommand{\thesubsection}{\arabic{subsection}}
\numberwithin{equation}{subsection}

\title[Linear programming problems algorithmically unsolvable] 
{Conditions when the problems of linear programming are algorithmically unsolvable} 
\author[V.~Chernov and V.Chernov]{Viktor Chernov and Vladimir Chernov}
\address{V.~Chernov, St Petersburg State University of Economics, Department of applied mathematics and economico-mathematical methods \\ Griboedov canal emb., 30--32, St Petersburg 191023, Russia} 
\email{chernov.v@unecon.ru} 
\address{V.~Chernov, 6188 Kemeny Hall, Mathematics Department, Dartmouth College,\\ Hanover NH 03755, USA} 
\email{vladimir.chernov@dartmouth.edu}

\subjclass{Primary 03D78; Secondary 03F60, 90C05}

\begin{abstract} We study the properties of the constructive linear programming problems. The parameters of linear functions in such problems are constructive real numbers. To solve such a problem is to find the optimal plan with the constructive real number components. We show that it is impossible to have an algorithm that solves an arbitrary constructive real programming problem.
\end{abstract}


\maketitle

\section{Introduction}
Linear programming (LP) is a relatively simple and well studied area of Mathematics. It is equipped with the problem solving algorithms and it has multiple theoretical and real life applications.  The models and methods developed in LP are used not only for the analysis and solutions of the LP problems but also for the  development of optimization algorithms and solving problems from the other areas. 

The problems of LP can be considered as a sort of a testing ground, i.e. a set of test problems for construction and analysis of various optimization algorithms.   The analysis of special cases of the LP algorithms is of an interest for problems from other optimization areas.

The algorithms solving LP programs work with real numbers that are the real valued parameters of linear functions. When we talk about the applicability of an algorithm to real numbers we should assume that these numbers are computable so that they themselves are given by some algorithms. The algorithms deal with constructive objects and a sequence of realizations of this idea yields a concept of a constructive real number.

We will show that the algorithmic side of solving even quite simple problems of Linear Programming is far from being simple.

\section{Particularities of Constructive Mathematics.}
The goal of this work is the study of constructive linear programming problems. For the linear programming problems with rational parameters there are well known algorithms of analyzing a problem and solving it. We show that the algorithmic situation is drastically different if some of the parameters of the problem are constructive real numbers (CRNs).

{\it Constructive Mathematics\/} deals with constructive objects that are essentially words in some alphabet.  For example natural numbers are words in the alphabet whose letters are digits. Rational numbers are words in the alphabet that in addition to digits has the division sign and the negative sign with the natural equivalence relation of equality given by bringing rational numbers to the common denominator.

Algorithms are, for example, Turing machines and they transform words into words. The algorithms can be encoded by natural numbers so that from this code and the input number one can get the result of applying this algorithm to the given input. 

The problem of applicability of an algorithm to a given input (the Halting Problem) is generally unsolvable, i.e. there is no algorithm that given the code of the algorithm and input data always tells 1 or 0 depending on whether this algorithm will eventually terminate working on the given input or not.

Constructive Mathematics assumes that the studied objects are constructive meaning that they can be given by words in a special alphabet. The proof of the theorem stating that a constructive object satisfying given criteria does exist assumes the possibility to construct this object. The proof of a statement that looks like a logical disjunction assumes that it is possible to say which of the disjunction terms is true. In particular the  statements giving the law of excluded middle i.e  disjunctions of the form ``$P$ or not $P$'' for a constructive mathematician are not necessarily true statements. To make them true we need to specify the true part of the disjunction.

For example, given a subset $M$ of natural numbers the statement $n\in M$ or $n\not\in M$ in the classical logic is true for every $n.$ However for a constructive mathematician this statement is true only when you can give an algorithm that terminates on each $n\in \N$ and outputs $1$ or $0$ depending on whether $n\in M$ or not. If such an algorithm exists then the set $M$ (and its complement) are called {\it decidable.} Many subsets of $\N$ are not decidable and in general such algorithms detecting whether $n\in M$ or not do not exist.

There are many unsolvable problems related to decidable sets. There is no general algorithm that given a decidable set (defined by its decision algorithm) tells if the set is finite or not. Moreover there is no algorithm that can always tell if a decidable set is empty or not.

Constructive mathematics uses a special constructive logic. If the existence of a constructive object is proved using this logic then from the proof one can deduce the way of constructing the object. 

If we use proof by contradiction to prove the existence of an object then from the view point of constructive logic we did not succeed to prove the existence. Instead we reached a contradiction to the statement that the object does not exist. However the statement that the object can not not exist does not imply that  the object does exist.

However there is an important class of cases when the double negation of the object existence implies that the object indeed does exist. This is when we proved that a certain decidable set of natural numbers (or words in an alphabet) is nonempty. 

Let $M$ be a decidable subset of the set of positive integers so that there exists a decision algorithm $L$ that given any $n\in N$ produces $1$ if $n\in M$ and it produces $0$ if $n\not \in M.$ 

Assume that an element of the set $M$ can not not exist then an element of $M$ does exist i.e. it can be constructed. The algorithm consists of going through all the elements of $\N$ sequentially and using $L$ to verify if the element is in $M$ or not. The fact that an element of $M$ can not not exist means that eventually we will find it using this algorithm. In other words we can say that every nonempty decidable subset of $\N$ is {\it inhabited.} This rule was formulated by A.~A.~Markov~\cite{Markov2} and is called {\it Markov principle.}

Constructive mathematics was developed in the works of A.~A.~Markov \cite{Markov1}, \cite{Markov2}, N.~A.~Shanin~\cite{Shanin} and their followers~\cite{Tseitin1, Tseitin2, Chernov1, Chernov2, Kushner,  Orevkov1, Orevkov2}.
A different but largely similar approach was formed by E.~Bishop~\cite{Bishop} and his followers~\cite{Bauer, Taylor}. Note that Bishop's approach does not allow the use of Markov principle.

A.~Turing in his works~\cite{Turing1, Turing2} formulated the notion close to the modern concept of a Constructive Real Number.

A {\em Constructive Real Number\/} (CRN) is a pair of algorithms $(F,R)$. An algorithm $F$ ({\it called the fundamental sequence\/}) transforms natural numbers into  rational numbers that are the members of a Cauchy sequence. Algorithm $R$ ({\it regulator of convergence}) transforms positive rational numbers into natural numbers and guarantees the convergence in itself of the sequence $F$ so that for every positive rational 
$\epsilon>0$ and every $m,n>R(\epsilon)$ we have $|F(m)-F(n)|<\epsilon.$ 

Algorithm $F$ is step by step creating a sequence of rational approximations to a limit real number while algorithm $R$ provides the sequence of error estimates on these approximations. Note that the limit real value is not a part of this definition and instead we use the pair of algorithms $(F,R).$ 

If a CRN $(F,R)$ is denoted by $x$, then $x(n)$ denotes $F(n).$

The notion of a CRN is a natural clarification of the notion of a computable number. The CRNs possess the natural properties of real numbers in their computable form.

The set of all CRNs is closed under arithmetic operations and under taking $\max$ and $\min.$ The proof of this is similar to the proof of continuity of these operations for the ordinary real numbers.

The relations of equality, strict and nonstrict inequalities on CRNs are introduced in a natural way. Let $x,y$ be two CRNs
\begin{itemize}
\item $x=y$ if for every positive rational $\epsilon$ we can algorithmically produce a natural number $M$ such that for all $m,n>M$ we have $|x(m)-x(n)|<\epsilon;$
\item $x<y$ ($y>x$) if we can produce a positive rational $\epsilon$ and natural $M$ such that for all $m,n>M$ we have $y(m)>x(n)+\epsilon$;
\item $x\leq y$ ($y\geq x$) if for every positive rational $\epsilon$ we can  algorithmically produce a natural $M$ such that for all $m,n>M$ we have $y(m)>x(n)-\epsilon$.
\end{itemize}

These relations have the usual properties. For example, the equality is reflexive, symmetric and transitive. The strict inequality is anti-reflexive, asymmetric and transitive. The nonstrict inequality is reflexive, anti-symmetric and transitive.

However some properties of these relationships are surprising. In particular all the three of these relationships are not decidable in the set of all CRNs. This means that there is no algorithm that finishes working on every pair of CRNs $(x,y)$ and prints $1$ or $0$ depending on whether these numbers are equal or not. Such algorithms do not esist for the other two above relationships as well.

However the relation of strict inequality on the pairs of CRNs is enumerable meaning that one can produce an algorithm that transforms natural numbers to all such pairs or alternatively there is an algorithm $L$ that finishes working on the pair of CRNs $(x,y)$ exactly if $x<y.$

The equality and nonstrict inequality relations are not enumerable, however their negations are enumerable.

Every pair $(x,y)$ of CRNs can not not satisfy one of the three relations $x<y$, $x=y$ or $y<x$ however there is no algorithm that can always tell which one of these three relationships holds for the pair. 

The relation $x<y$ implies $x\leq y$ and $x=y$ implies $x\leq y$ as well. From the relaion $x\leq y$ follows that there can not not be a term of the disjunction $x<y$ or $x=y$ which is true. However it is impossible to tell which term is true, i.e. there is no algorithm that finishes working on every pair $(x,y)$ of CRNs satisfying $x\leq y$ and produces $1$ or $0$ depending on whether $x=y$ or $x<y$ does hold.

If the statement that $x\leq y$ does not hold then $y<x$. If the statements $x=y$ does not hold then $x<y$ or $y<x$ and one can create an algorithm that given any pair of nonequal CRNs decides which one is larger~\cite{Markov2}.

Each one of the relations $<, >, =$ is equivalent to its double negation. In particular if two CRNs can not be not equal then they are equal.

If in a given subset of the set of all CRNs one of this relations is decidable then the other two relations are decidable as well. 

Every CRN is given by a pair of algorithms so from the classical view point the set of all CRNs is countable. However this statement is constructively false and given any sequence of CRNs one can construct a CRNs that is not equal to any members of this sequence.

The set of all CRNs is constructively complete: given any algorithmic sequence of CRNs equipped with an algorithm guaranteeing its convergence in itself, one can construct a CRN that is the limit of this sequence.

The constructive versions of some of the classical mathematical analysis statements are false. For example the constructive version of the statement that a bounded montotonic sequence always has a limit is false. The usual proof of this real analysis fact involves dividing the interval into two equal halves and choosing the half that contains infinitely many elements of the sequence. It is assumed that such a choice can be done and the corresponding disjunction is true.

However it is not clear how to algorithmically determine the needed half of the segment. Moreover the problem is not in the particular proof approach that we considered. There is a constructive analysis counterexample to such a statement in constructive mathematics given by the famous Specker sequence~\cite{Specker}.

In~\cite{Chernov1, Chernov2} the Specker sequence is used to clarify the surprising topological properties of the space of all CRNs.

Different approaches to the introduction of constructive real numbers,  their properties, and relations arising under different approaches to introducing CRNs can be found in~\cite{Kushner, Markov2, Shanin}.

A rational number can be presented as a word in the alphabet of rational numbers construction (consisting of digits, negative  and ratio signs). Equality, strict and  nonstrict comparison relations on the set of rational numbers are decidable. 

Every rational number can be presented in a standard way as a constant sequence equipped with the trivial convergence regulator that prints $1$ for every positive rational number. So all rational numbers belong to the class of CRNs. We shall call such CRN presentations of rational numbers to be the {\it standard real presentation of a rational number.\/} A {\it quasi rational} number is a standard real presentation of a rational number and a CRN equal to it.

Under this correspondence equal rational numbers are mapped to equal quasi-rational numbers. The notions of strict and nonstrict inequalities are also preserved by this correspondence. 

We will distinguish rational and quasi-rational numbers. In particular there is no algorithm that determines if a given CRN is quasi-rational.

A {\it Constructive function\/} is an algorithm that transforms constructive numbers into constructive numbers with the assumption that equal CRNs should be transformed to equal CRNs. Important and sometimes surprising properties of constructive functions can be found in~\cite{Bishop, Tseitin1, Tseitin2, Chernov1, Kushner, Markov1, Markov2, Orevkov1, Orevkov2, Shanin}. In particular all the linear functions of a constructive real number variable with constructive coefficients are constructive functions. Same is true for polynomial functions of a constructive real variable.

Every constructive function is {\em effectively continuous} i.e. one can construct the algorithmic continuity regulator at every constructive point in its domain~\cite{Markov2, Tseitin2}.

At the same time one can construct a constructive function defined on all the CRNs in $[0,1]$ that is not constructively uniformly continuous. In a similar spirit, one can define constructive functions on $[0,1]$ that are not bounded i.e. not attaining their maximal or minimal value at any constructive point.  Examples of this sort are obtained as limits of sequences of locally piece-wise linear functions. 

In this paper we will deal with linear functions of constructive real variable with CRN coefficients.

In particular all the linear functions of a constructive real number variable with constructive coefficients are constructive functions

We will concentrate our attention on the Constructive Linear Programming Problems (CLPP). The parameters of linear functions in the conditions of a CLPP are CRNs. To solve a CLPP means to find an optimal plan with CRN components. 

The CLPP with rational parameters have the usual properties of the linear programming problems~\cite{Gale}. However in the general case when these parameters are CRNs these properties do not hold. 

In particular it is impossible to have an algorithm that computes the optimal plan in a nonempty and bounded domain. Moreover it is impossible to detect the solvability of such a problem even if we have the complete algorithms for computing the parameters. It is impossible to have an algorithm that given a plan verifies if the plan is allowable, and there is no algorithm that given an allowable plan determines if this plan is optimal. 

{\em  In other words the solvability of CLPP is algorithmically unsolvable. \/}

We also study the algorithmic unsolvability of many other properties of CLPP.

The proof of the Theorems in this paper are based on constructing counterexamples -- on constructing families of CLPP for which the desired algorithm does not exist. It could have been that the  reason why this algorithm does not exist is related to the complexity of the examples, however this is not so and the CLPP families in the proofs are rather simple looking.

The proofs show that the reasons why these algorithms do not exist are that the equality and inequality for CRNs are not decidable relations.

\section{Main Results} Let us introduce the constructions that are the basis of the proofs of the following theorems.

Let $S$ be a (partially defined) algorithm that transforms natural numbers into 0 and 1 and that is not extendable to a totally defined algorithm~\cite{Kleene}. Let us note that the Halting problem for this algorithm is undecidable. Also the set of numbers on which $S$ gives output 1 (or output 0) is undecidable.

\begin{construction}\label{construction1} For each natural $n$ we define the following sequence of rational numbers $s_n.$ 
\begin{itemize}
\item
For each $k$ we put $s_n(k)=0$ if the algorithm $S$ did not yet finish working on input $n$ by step $k$; 
\item we put $s_n(k)=-2^{-m}$ if $S$ has finished working on $n$ by step $k$, produced $0$ and $m$ is the step number when $S$ finished working;
 \item we put $s_n(k)=2^{-m}$ if $S$ has finished working on $n$ by step $k$, produced $1$ and $m$ is the step number when $S$ finished working.
\end{itemize}

For each $n$ the sequence $s_n$ converges in itself at a geometric progression speed thus giving a CRN. If $S$ does not ever stop working on $n$ then $s_n=0$ and $s_n$ is respectively bigger and smaller than 0 if $S$ applies to $n$ and produces $1$ and $0$ respectively.
\end{construction}

The proofs of the Theorems below are based on different CLPPs. We tried to make the CLPPs to be very simple so that the reasons for their unsolvability would be crystal clear.

\begin{construction}\label{construction2}
For a given $n$ let $s_n$ be the CRN defined in Construction~\ref{construction1}. We define the CLPP problem $P_n$ to be: $\max x$ under the conditions that $s_n \cdot x=0$ and $0\leq x\leq1.$

The set of allowable plans of this CLPP is bounded and nonempty since $x=0$ is an allowable plan.
\end{construction}

\begin{thm}\label{theorem3} There is no algorithm such that for an arbitrary CLPP with the nonempty bounded region of allowable plans it does 
\begin{enumerate} 
\item compute the optimal plan;
\item compute the extremal value of the cost function.
\end{enumerate}
\end{thm}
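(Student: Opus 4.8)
The plan is to use the family of CLPPs $P_n$ from Construction~\ref{construction2} as counterexamples, reducing the Halting problem for the non-extendable algorithm $S$ to the existence of the hypothesized solving algorithm. The key observation is that the feasible region $\{x : s_n\cdot x = 0,\ 0\leq x\leq 1\}$ of $P_n$ depends on the sign of the CRN $s_n$, and by Construction~\ref{construction1} this sign encodes whether $S$ halts on $n$ and what it outputs. Specifically, if $s_n\neq 0$ (i.e. $S$ halts on $n$), then the constraint $s_n\cdot x=0$ forces $x=0$, so the optimal plan is $x=0$ and the extremal value is $0$; whereas if $s_n=0$ (i.e. $S$ runs forever on $n$), then the constraint is vacuous, every $x\in[0,1]$ is allowable, the optimal plan is $x=1$ and the extremal value is $1$.

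I would carry this out in the following steps. First I would verify that each $P_n$ has a nonempty bounded feasible region, which is immediate from Construction~\ref{construction2} since $x=0$ is always allowable and all plans satisfy $0\leq x\leq 1$. Second, I would establish the dichotomy above: show carefully that $s_n\neq 0$ implies the feasible set is exactly $\{0\}$ with extremal value $0$, and that $s_n=0$ implies the feasible set is $[0,1]$ with extremal value $1$. Third, I would argue by contradiction: suppose an algorithm $A$ exists that, given any CLPP with nonempty bounded feasible region, outputs either the optimal plan or the extremal value. Running $A$ on the uniformly constructible family $\{P_n\}$ would produce, for each $n$, a CRN that is the extremal value (either directly in case~(2), or as the cost of the optimal plan in case~(1)).

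The crux is then to convert this CRN output into a decision of the Halting problem for $S$. By the dichotomy, the extremal value of $P_n$ is $0$ exactly when $S$ halts on $n$ and $1$ exactly when $S$ does not halt on $n$. Since $0$ and $1$ are separated by the rational gap $1/2$, and the extremal value is a CRN whose regulator of convergence we can invoke, I would compute $F(k)$ for $k$ past the regulator applied to $\epsilon = 1/4$: if the approximation lies below $1/2$ we conclude the value is $0$ and hence $S$ halts on $n$, and if it lies above $1/2$ we conclude the value is $1$ and hence $S$ does not halt on $n$. This yields a total decision algorithm for whether $S$ halts on each $n$, contradicting the undecidability of the Halting problem for $S$ noted before Construction~\ref{construction1}.

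The main obstacle I anticipate is the passage in the previous paragraph from a CRN whose value is \emph{classically} either $0$ or $1$ to an \emph{effective} decision between the two. This is exactly the kind of step where constructive subtlety bites: one cannot in general decide equality or inequality of CRNs. Here it works only because the two possible values are a fixed positive rational distance apart, so a single comparison of a sufficiently good rational approximation against $1/2$ suffices, using the regulator $R$ that is part of the CRN data. I would be careful to present this comparison as a genuinely terminating computation rather than an appeal to the law of the excluded middle, and to handle case~(1) and case~(2) of the theorem uniformly by extracting the extremal value from the optimal plan's cost in case~(1).
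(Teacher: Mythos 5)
Your proposal is correct and follows essentially the same route as the paper: the same family $P_n$ from Construction~\ref{construction2}, the same dichotomy (optimal value $0$ when $S$ halts on $n$, value $1$ when it does not), and the same reduction to the Halting problem for $S$. The only difference is that you spell out explicitly the step the paper leaves implicit, namely how to effectively decide between the two possible outputs by comparing a rational approximation (obtained via the regulator with $\epsilon=1/4$) against $1/2$, which is a correct and welcome clarification rather than a departure.
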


\begin{proof} Consider a sequence of CLPP $\{P(n)\}_{n=1}^{\infty}$ given in Construction~\ref{construction2}.  The set of allowable plans of $P(n)$ is nonempty and bounded for every $n.$ 

If algorithm $S$ finishes working on input $n$ then $s_n\neq 0$. In this case the only allowable plan is $x=0$ which hence is the optimal plan. 

If $S$ does not ever finish working on input $n$ then $s_n=0$ and the allowable plans are all the CRN points of the interval $[0,1]$. The optimal plan is $x=1.$

In both cases the optimal value and the optimal plan are equal CRNs.

If there would exist an algorithm that would find an optimal value or an optimal plan for every CLPP then this algorithm would determine if $S$ will finish working on input $n$ but this is not possible.
\end{proof}

The following two Theorems show that there does not exist an algorithm that can always tell given a plan if this plan is optimal or even allowable. The presence of  extra information does not help.

\begin{thm}\label{theorem4}
There is no algorithm that given a CLPP with a nonempty bounded set of allowable plans can always decide if a given plan is allowable.
\end{thm}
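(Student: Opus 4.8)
The plan is to reuse the family $\{P_n\}$ from Construction~\ref{construction2} and to reduce the undecidable halting behaviour of $S$ to the allowability question, this time not by asking for the optimal plan but by feeding the decision algorithm a single fixed candidate plan together with the CLPP.

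First I would take the candidate plan to be the quasi-rational number $x=1$, the standard real presentation of the rational $1$. This plan is computable and can be produced uniformly from $n$, so the pair consisting of $P_n$ and the plan $x=1$ is an algorithmically presented instance of the allowability problem depending only on $n$.

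Next I would analyze when $x=1$ is allowable for $P_n$. The box constraint $0\leq x\leq 1$ is satisfied by $x=1$, so allowability reduces to the equality constraint $s_n\cdot 1=0$, that is, to $s_n=0$. By Construction~\ref{construction1}, $s_n=0$ holds exactly when $S$ never finishes working on input $n$. Hence $x=1$ is an allowable plan for $P_n$ if and only if $S$ does not halt on $n$.

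Finally, suppose toward a contradiction that some algorithm $A$ decides allowability for every CLPP with a nonempty bounded feasible region. Running $A$ on the instance $(P_n,\,x=1)$ would output $1$ precisely when $S$ does not halt on $n$ and $0$ precisely when it does; interchanging the two outputs then gives a total decision procedure for the halting set of $S$, which is impossible. The only point needing care is confirming that the equality $s_n\cdot 1=0$ encodes \emph{exactly} the non-halting condition, which is precisely the content of Construction~\ref{construction1}, and that presenting a fixed quasi-rational as the tested plan is legitimate. I expect no genuine obstacle beyond this bookkeeping: the essential mechanism is once more the undecidability of equality for CRNs, here specialized to the test $s_n=0$.
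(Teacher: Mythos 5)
Your proposal is correct and follows exactly the paper's own argument: the paper likewise fixes the family $P(n)$ from Construction~\ref{construction2}, tests the plan $x=1$, observes that it is allowable precisely when $S$ never halts on $n$, and concludes by the undecidability of the Halting problem. The extra bookkeeping you note (uniform presentation of the quasi-rational $1$ and the equivalence $s_n=0$ iff non-halting) is sound and consistent with Construction~\ref{construction1}.
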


\begin{proof} Fix a natural $n$ and consider a CLPP $P(n)$ given by Construction~\ref{construction2}. The set of allowable plans is bounded and nonempty for all $n.$ The plan $x=1$ is allowable exactly when $S$ does not ever finish working on input $n.$ If  an algorithm from Theorem formulation would exist then it would solve the Halting problem for $n$ and this  is impossible. 
\end{proof}

\begin{thm}\label{theorem5}
There is no algorithm that given a CLPP with a nonempty bounded set of allowable plans always decides if a given allowable plan is optimal or not.
\end{thm}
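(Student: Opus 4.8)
The plan is to reuse the family of problems $\{P(n)\}$ from Construction~\ref{construction2}, now paired with the fixed plan $x=0$, and to show that deciding optimality of this plan reduces to the halting problem for $S$. First I would check that $x=0$ is an allowable plan of $P(n)$ for every $n$: since $s_n\cdot 0=0$ and $0\leq 0\leq 1$, the constraints are satisfied irrespective of whether $S$ halts on $n$. This is the key point distinguishing the present argument from Theorem~\ref{theorem4}: here we never have to decide allowability, because we hand the algorithm an always-allowable plan, yet deciding its optimality will still turn out to be impossible.

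Next I would carry out the same case split used in the proof of Theorem~\ref{theorem3}. If $S$ finishes working on input $n$, then $s_n\neq 0$, and the constraint $s_n\cdot x=0$ forces $x=0$; hence $x=0$ is the unique allowable plan and is therefore optimal. If $S$ never finishes on $n$, then $s_n=0$, every CRN point of $[0,1]$ is allowable, and $x=1$ beats $x=0$, so $x=0$ is not optimal. Combining the two cases yields the clean equivalence: the allowable plan $x=0$ of $P(n)$ is optimal if and only if $S$ halts on $n$.

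I would then conclude by contradiction. A hypothetical algorithm that, for an arbitrary CLPP with nonempty bounded allowable region and a given allowable plan, decided whether that plan is optimal could be applied to the pair $(P(n),\,x=0)$; by the equivalence above its output would decide whether $S$ halts on $n$, contradicting the undecidability of the halting problem for $S$.

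I expect the only delicate step to be the verification, in the case $s_n\neq 0$, that the single constraint $s_n\cdot x=0$ constructively forces $x=0$ and hence pins down $x=0$ as the unique, and thus optimal, plan. This cannot be argued by an undecidable case analysis; instead one should note that $s_n\neq 0$ means $s_n>0$ or $s_n<0$, so that $x>0$ or $x<0$ would make the product $s_n\cdot x$ strictly positive or strictly negative and hence nonzero, whence $x\neq 0$ is contradictory and $x=0$ follows from the double-negation stability of CRN equality noted earlier in the excerpt. Everything else is a direct transcription of the reductions already used for Theorems~\ref{theorem3} and~\ref{theorem4}.
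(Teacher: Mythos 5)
Your proposal is correct and follows essentially the same route as the paper: it uses the family $P(n)$ from Construction~\ref{construction2} with the always-allowable plan $x=0$, establishes that $x=0$ is optimal exactly when $S$ halts on $n$, and concludes by reduction to the Halting problem. Your extra verification that $s_n\neq 0$ constructively forces $x=0$ (via the dichotomy $s_n>0$ or $s_n<0$ for nonequal CRNs and double-negation stability of equality) is a welcome elaboration of a step the paper leaves implicit.
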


\begin{proof}
Fix $n\in \N$ and consider the CLPP $P(n)$ given by Construction~\ref{construction2}. The set of allowable plans of $P(n)$ is bounded and nonempty for each $n.$ The allowable plan $x=0$ is optimal exactly if $S$ finishes working on input $n$. If the algorithm from the statement of the Theorem would exist then it would solve the Halting problem for the algorithm $S$ which is impossible.
\end{proof}

\begin{construction}\label{construction6}
Let $E$ be a closed interval in the real line. Let $E_0$ and $E_1$ be the left and the right sub-intervals of $E$ each one of length equal to $2/3$ length $E.$ We now define an algorithm $F$ that transforms each CRN in $E$ to $0$ or $1.$ To find $F(x)$ we compute $x$ with the sufficiently high precision (precision better than $1/6$ of the length of $E$ is fine). If this approximation is in $E_0$ then we put $F(x)=0$ and we put $F(x)=1$ in all other cases. 

Note that the algorithm $F$ is not a well defined algorithm on CRNs since it can transform equal CRNs to different answers. 

It will be important that $F$ is defined for all CRNs in $E$ and if $F(x)=0$ then $x\in E_0$, while if $F(x)=1$ then $x\in E_1.$
\end{construction}

Theorem~\ref{theorem3} proves that there are no algorithms that can compute the optimal plan or the optimal value. Note however that if you know the optimal plan then you can compute the optimal value. The next Theorem~\ref{theorem7} shows that the converse statement is false.

\begin{thm}\label{theorem7} There exists an enumerable class of CLPPs for which you can compute the extremal value of the cost function but you can not compute the optimal plan.
\end{thm}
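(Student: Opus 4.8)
The plan is to keep the family of Construction~\ref{construction2} essentially intact but to move the undecidability out of the feasible region and into the cost function, so that the extremal value becomes a visibly constructible CRN while the \emph{location} of the optimum still encodes the behaviour of $S$. Concretely, I would take $P(n)$ to be the CLPP ``$\max\, s_n\cdot x$ subject to $0\le x\le 1$'', where $s_n$ is the CRN of Construction~\ref{construction1}. Since $n\mapsto s_n$ is an algorithm, $\{P(n)\}$ is an enumerable class. The point of this choice is the case analysis of the optimum: if $S(n)=1$ then $s_n>0$ and the unique optimum is $x=1$; if $S(n)=0$ then $s_n<0$ and the unique optimum is $x=0$; if $S$ never halts then $s_n=0$ and every point of $[0,1]$ is optimal.

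First I would compute the extremal value. For $x\in[0,1]$ one has $s_n x\le\max(s_n,0)$, with equality at $x=1$ when $s_n\ge 0$ and at $x=0$ when $s_n\le 0$; hence the optimal value of $P(n)$ equals $\max(s_n,0)$. Because the CRNs are closed under $\max$, the number $\max(s_n,0)$ is a CRN constructible uniformly in $n$, so the extremal value is computable. (It is not a fixed rational, being $0$ when $S$ does not halt and $2^{-m}$ when $S$ halts at step $m$ with output $1$, but uniform constructibility is all that the statement requires.)

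Next I would show that the optimal plan cannot be computed, using Construction~\ref{construction6}. Fix a closed interval $E\supsetneq[0,1]$ arranged so that $1\notin E_0$ and $0\notin E_1$ (for instance $E=[-1/4,5/4]$, giving $E_0=[-1/4,3/4]$ and $E_1=[1/4,5/4]$), and let $F$ be the associated map. Suppose for contradiction that some algorithm $A$ produced, for every $n$, an optimal plan $x^*(n)\in[0,1]$ of $P(n)$ as a CRN, and set $g(n)=F(x^*(n))$. Since $F$ is defined on all CRNs of $E$, the map $g$ is a \emph{total} algorithm $\N\to\{0,1\}$. On the halting inputs the optimal plan is forced: if $S(n)=1$ then $x^*(n)=1$, and because $F(x^*(n))=0$ would imply $x^*(n)\in E_0$ while $1\notin E_0$, we get $g(n)=1=S(n)$; symmetrically, if $S(n)=0$ then $x^*(n)=0\notin E_1$ forces $g(n)=0=S(n)$. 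Thus $g$ is a total algorithm agreeing with $S$ on every input where $S$ halts, i.e. a total extension of $S$, contradicting the choice of $S$ as non-extendable.

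The step I expect to be the crux is exactly the non-halting case, and it is the reason Construction~\ref{construction6} is invoked rather than a naive comparison with $1/2$. If I tried to read off the bit by searching in parallel for $x^*(n)>1/2$ or $x^*(n)<1/2$ via the enumerability of strict inequality, the search could fail to terminate when $s_n=0$ and $x^*(n)$ happens to equal $1/2$, so the derived procedure would not be total and could not be confronted with non-extendability. The function $F$ circumvents this: it is total on $E$ and may return an arbitrary bit on the overlap $E_0\cap E_1$, yet its defining guarantee pins it down on the points $0$ and $1$ occurring in the halting cases. In effect the argument shows that a computable optimal plan would recursively separate $\{n:S(n)=1\}$ from $\{n:S(n)=0\}$, which is impossible precisely because $S$ admits no total extension.
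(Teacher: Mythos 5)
Your proof is correct and follows essentially the same route as the paper's: a family of CLPPs whose extremal value is a maximum of two CRNs (hence uniformly computable), while any algorithm producing an optimal plan, composed with the map $F$ of Construction~\ref{construction6}, would yield a total extension of $S$, which is impossible. The only difference is cosmetic --- you use the one-variable problem $\max s_n\cdot x$ on $[0,1]$ with value $\max(s_n,0)$, where the paper uses $\max\bigl((1+s_n)x+(1-s_n)y\bigr)$ on the simplex with value $\max\{1+s_n,\,1-s_n\}$.
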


\begin{proof}
Take $H(n)$ with the cost function $\max \bigl((1+s_n)x+(1-s_n)y\bigr)$ under the conditions $x+y\leq 1$, $x\geq 0$, $y\geq 0.$

If the algorithm $S$ finishes working on input $n$ and produces $0$ then $(1+s_n)<(1-s_n)$ and the optimal plan for the CLPP $H(n)$ is the point with coordinates $(0,1).$ 

If the algorithm $F$ finishes working on input $n$ and produces $1$ then $(1+s_n)>(1-s_n)$ and the optimal plan is $(1,0).$

If $S$ never finishes working on input $n$ then $(1+s_n)=(1-s_n)$ and an optimal point is any point of the interval with end points $(1,0)$ and $(0,1).$

Let us apply the algorithm $F$ from Construction~\ref{construction6} to the points of the interval $E.$ 

Assume that there is an algorithm $G$ that transforms each $n\in \N$ to an optimal plan of the CLPP $H(n).$ Then the composition of $F$ and $G$ would be an extension of $S$ to all $n\in \N$ yielding a contradiction. 

Thus the algorithm $G$ does not exist. However the extrenal value $z$ of the cost function does exist. For each $n$ it equals to the maximum of the two CRNs  $\max\{1+s_n, 1-s_n\}$ and hence is a CRN itself. 
\end{proof} 

\begin{thm}\label{theorem8} 
There is no algorithm that can given a CLPP with a nonempty set of allowable plans always decide 
\begin{enumerate}
\item if the set of allowable plans is bounded;
\item if the cost function is bounded;
\item if there does exist an optimal plan.
\end{enumerate}
\end{thm}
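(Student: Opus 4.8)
The plan is to exhibit a single uniform, algorithmically generated family of CLPPs $\{Q(n)\}_{n=1}^\infty$ for which each of the three properties in the statement is logically equivalent to the halting of $S$ on input $n$. Since the set of $n$ on which the non-extendable algorithm $S$ halts is undecidable, no decision procedure for any of the three properties can then exist.

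For the construction I would take the CRN $s_n$ from Construction~\ref{construction1} and define $Q(n)$ to be the problem $\max x$ under the linear constraints $s_n\cdot x\leq 1$, $-s_n\cdot x\leq 1$, and $x\geq 0$. All coefficients are CRNs produced algorithmically from $n$ (the number $-s_n$ is again a CRN, since the CRNs are closed under arithmetic operations), so the family is enumerable. The plan $x=0$ satisfies every constraint, hence the set of allowable plans is always nonempty, exactly as the hypothesis of the Theorem requires. Note that, unlike Theorems~\ref{theorem3}--\ref{theorem5}, we do \emph{not} assume the feasible region is bounded, which is appropriate here since part~(1) is precisely about deciding that boundedness.

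The heart of the argument is a case analysis that I expect to be entirely routine. If $S$ halts on $n$, then $s_n\neq 0$; whichever sign $s_n$ has, one of the two constraints $\pm s_n\cdot x\leq 1$ becomes $|s_n|\cdot x\leq 1$ with $|s_n|>0$, forcing $0\leq x\leq 1/|s_n|$. In this case the feasible region is bounded, the cost function $\max x$ is bounded, and the optimal plan $x=1/|s_n|$ exists. If $S$ never halts on $n$, then $s_n=0$, both constraints reduce to $0\leq 1$, and the feasible region is the unbounded ray $x\geq 0$; then the region is unbounded, the value of $\max x$ is unbounded above, and no optimal plan exists. Thus for every one of the three properties, the property holds for $Q(n)$ if and only if $S$ halts on input $n$. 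Crucially, establishing this equivalence never requires us to decide which of the two cases actually occurs, so no constructively illegitimate disjunction is used.

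To finish, I would argue by contradiction in the now familiar way. Suppose some algorithm decided one of the three properties for an arbitrary CLPP with nonempty allowable region. Composing it with the map $n\mapsto Q(n)$ would yield a total algorithm that outputs $1$ or $0$ according to whether $S$ halts on $n$, i.e. a decision procedure for the Halting problem of $S$, which is impossible. The only obstacle, such as it is, lies not in the logic of the reduction but in the bookkeeping needed to verify that the \emph{single} family $Q(n)$ simultaneously governs all three properties through the one dichotomy $s_n=0$ versus $s_n\neq 0$; once this is isolated, the three parts follow in parallel with no further constructive subtlety.
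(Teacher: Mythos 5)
Your proof is correct and takes essentially the same route as the paper: the paper's $Q(n)$ is $\max x$ subject to the single constraint $s_n\cdot x=0$, so the feasible set is $\{0\}$ when $S$ halts on $n$ and the whole line when it does not, and all three properties reduce to the one dichotomy $s_n=0$ versus $s_n\neq 0$, i.e.\ to the Halting problem for $S$. Your constraints $\pm s_n\cdot x\leq 1$, $x\geq 0$ are only a cosmetic variant (feasible set $[0,2^m]$ instead of $\{0\}$ in the halting case), and the reduction is identical.
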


\begin{proof}
For a given $n\in \N$ we define a CLPP $Q(n)$ as follows: $\max x$ under the condition that $s_n\cdot x=0.$

The set of allowable plans of this CLPP is nonempty and $x=0$ is an allowable plan. 

The set of allowable plans of $Q(n)$ is bounded if and only if cost function is bounded if and only if there is an optimal plan if and only if $s_n\neq 0.$ This in turn happens exactly when $S$ finishes working on input $n.$ Thus if an algorithm as in the Theorem statement would exist it would solve the Halting problem for $S.$
\end{proof}

\begin{thm}\label{theorem9}
There is no algorithm that given a CLPP with a bounded set of allowable plans decides if the set of allowable plans is empty or not.
\end{thm}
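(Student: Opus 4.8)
The plan is to exhibit an enumerable family of CLPPs, all with bounded feasible regions, whose emptiness encodes an undecidable property of the machine $S$. As in the earlier proofs I would build everything out of the single CRN $s_n$ from Construction~\ref{construction1}, whose sign records the behaviour of $S$ on input $n$: recall that $s_n<0$ precisely when $S$ halts on $n$ and produces $0$, that $s_n>0$ precisely when $S$ halts and produces $1$, and that $s_n=0$ precisely when $S$ never halts on $n$.

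First I would define, for each $n\in\N$, the CLPP $R(n)$ to be $\max x$ subject to the two constraints $0\leq x$ and $x\leq s_n$; the set of allowable plans is the set of CRNs $x$ with $0\leq x\leq s_n$. Since $|s_n|\leq 1/2$ for every $n$, this region is contained in $[0,1]$ whenever it is nonempty, and the empty set is trivially bounded. Hence $R(n)$ has a bounded set of allowable plans for every $n$, so the family $\{R(n)\}$ meets the hypothesis of the theorem.

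Next I would characterise emptiness and derive the contradiction. The constraints $0\leq x\leq s_n$ are simultaneously satisfiable if and only if $s_n\geq 0$; indeed, if $s_n\geq 0$ the plan $x=0$ is allowable, while if $s_n<0$ no $x$ can satisfy both inequalities. Thus the set of allowable plans of $R(n)$ is empty exactly when $s_n<0$, that is, exactly when $S$ halts on $n$ and produces $0$. An algorithm that, given any CLPP with a bounded set of allowable plans, always decided whether that set is empty would therefore halt on every $R(n)$ and output $1$ or $0$ according to whether $n$ belongs to the set $\{n : S(n)=0\}$; this would be a decision procedure for the set of inputs on which $S$ outputs $0$, which is undecidable. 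Hence no such algorithm exists.

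The step I expect to be the real obstacle is not the final contradiction but the choice of constraints that keeps the region bounded while still encoding undecidability. The most tempting reduction, making the region nonempty exactly when $S$ halts (that is, when $s_n\neq 0$), cannot be carried out with a bounded region: forcing feasibility for both signs of $s_n$ requires $x$ of size comparable to $1/|s_n|$, and since a nonzero $s_n$ can be arbitrarily small this destroys any uniform bound. The device that resolves this is to track the \emph{one-sided} condition $s_n<0$ rather than $s_n\neq 0$, trading the halting set for the output-$0$ set; the latter is still undecidable, yet it is cut out by the manifestly bounded region $[0,s_n]$.
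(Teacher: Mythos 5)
Your proof is correct, but it takes a different reduction from the paper's. The paper's $R(n)$ is $\max x$ subject to $s_n\cdot x=0$ \emph{and} $x=1$: the feasible set is $\{1\}$ when $s_n=0$ and empty when $s_n\neq 0$, so it is always bounded, and deciding emptiness decides the full Halting problem for $S$ on $n$. You instead use $0\leq x\leq s_n$, whose emptiness is equivalent to the one-sided condition $s_n<0$, reducing to the undecidability of the output-$0$ set $\{n: S(n)=0\}$ (which the paper does record as undecidable, so your reduction target is legitimate, and your use of $\neg(0\leq s_n)\Rightarrow s_n<0$ is licensed by the paper's stated facts about CRN order). Both arguments are sound; the paper's buys a cleaner target (the Halting set itself) and reuses the $s_n\cdot x=0$ constraint from its other constructions, while yours stays entirely within inequality constraints. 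One remark: your closing claim that a bounded region cannot encode ``nonempty iff $s_n= 0$'' is false as a general assertion --- the paper achieves exactly that by pairing $s_n\cdot x=0$ with the pinning constraint $x=1$, which forces $x$ to a fixed value rather than to something of size $1/|s_n|$. This does not damage your proof, since your alternative encoding works, but the obstacle you describe is not actually there.
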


\begin{proof}
For a given $n\in \N$ we define CLPP $R(n)$ as follows: $\max x$ under the condition that $s_n\cdot x=0$ and $x=1.$

The set of allowable plans of such CLPP is nonempty and the optimal plan exists exactly if $s_n=0$ i.e. when $S$ never finishes its work on $n.$ So if the algorithm from the Theorem statement would exist then it would solve the Halting problem for S.
\end{proof}

\begin{thm}\label{theorem10}
There is no algorithm that given any clearly unsolvable CLPP determines the reason for the unsolvability i.e.~if the set of allowable plans is empty or if the cost function is unbounded.
\end{thm}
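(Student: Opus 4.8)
The plan is to follow the same template as the previous theorems and exhibit an enumerable family of CLPPs $T(n)$, one for each $n\in\N$, every one of which is unsolvable, but for which the \emph{reason} for unsolvability flips according to whether the algorithm $S$ halts on input $n$. Concretely, using the CRN $s_n$ from Construction~\ref{construction1}, I would take $T(n)$ to be $\max x$ subject to $s_n\cdot x=0$ and $x\geq 1$. This mirrors the problem $R(n)$ used in Theorem~\ref{theorem9}, except that replacing the rigid constraint $x=1$ by the half-line $x\geq 1$ is precisely what turns the non-halting case from ``solvable'' into ``cost unbounded.''

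Next I would carry out the two-case analysis. If $S$ halts on $n$, then $s_n\neq 0$, so the constraint $s_n\cdot x=0$ forces $x=0$, which is incompatible with $x\geq 1$; hence the set of allowable plans is empty. If $S$ never halts on $n$, then $s_n=0$, the constraint $s_n\cdot x=0$ is vacuous, the set of allowable plans is the whole half-line $\{x:x\geq 1\}$, and the cost function $\max x$ is unbounded on it. Thus in both cases $T(n)$ has no optimal plan; the unsolvability is caused by an empty feasible set exactly when $S$ halts on $n$, and by an unbounded cost function exactly when $S$ does not halt on $n$.

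The reduction then writes itself: an algorithm that, fed a clearly unsolvable CLPP, outputs which of the two reasons holds would, applied to the family $T(n)$, decide on every input $n$ whether the cause is emptiness or unboundedness, i.e.\ whether $S$ halts on $n$ or not. This contradicts the undecidability of the Halting problem for $S$, so no such algorithm exists.

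The one step that needs genuine care --- and the part I expect to be the real obstacle --- is verifying that each $T(n)$ is honestly ``clearly unsolvable,'' i.e.\ that I may hand these problems to the hypothetical algorithm without myself first having to decide halting. I would argue uniformly and constructively that $T(n)$ has no optimal plan: suppose $x^{*}$ were optimal; from $x^{*}\geq 1$ I get $x^{*}>0$, and then $s_n\cdot x^{*}=0$ together with $x^{*}>0$ forces $s_n=0$ (dividing by the positive CRN $x^{*}$); but if $s_n=0$ the point $x^{*}+1$ is also allowable and has strictly larger cost, contradicting optimality. Hence no optimal plan exists for any $n$, and this argument invokes neither the decidability of $s_n=0$ nor the Halting problem, so the whole family is legitimately a family of unsolvable CLPPs to which the alleged algorithm must apply.
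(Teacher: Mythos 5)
Your proof is correct and follows essentially the same reduction as the paper: a family of unsolvable CLPPs built from $s_n$ in which the feasible set is empty exactly when $S$ halts on $n$ and the cost function is unbounded exactly when $S$ does not halt. The only difference is cosmetic --- the paper keeps the constraint $x=1$ and makes the cost function $\max y$ with a free variable $y$, whereas you relax the constraint to $x\geq 1$ and keep $\max x$; both yield the same dichotomy and the same contradiction with the Halting problem.
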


\begin{proof} For a given $n\in \N$ we define CLPP $T(n)$ as follows: $\max y$ subject to the conditions $s_n\cdot x=0$ and $x=1.$

This is the CLPP from the previous problem with the changed cost function and now we need to find the maximum of $y$ which does not participate in the restrictions on allowable plans. 
The set of allowable plans is nonempty exactly when the cost function is not bounded and this happens exactly when $s_n\neq 0$ i.e. when $S$ finishes working on $n$. So if we can determine the reason for unsolvability of the CLPP $T(n)$ then we can solve the Halting problem for $S.$
\end{proof}

The next Theorem shows that for sets given by a system of linear equations and inequalities with CRN coefficients there is no algorithm that given such a nonempty set constructs one point in the set. In other words the fact that such a set is nonempty does not imply that it can be inhabited.

\begin{thm}\label{theorem11}
There is no algorithm that given any CLPP with a nonempty allowable plan set determines an allowable plan.
\end{thm}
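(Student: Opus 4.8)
The plan is to reduce the Halting problem for $S$ to the hypothetical point-finding algorithm, exactly in the spirit of Theorem~\ref{theorem7}, but now transferring the obstruction from the optimal set down to the allowable set itself. I would build a family of CLPP $\{U(n)\}$ whose allowable region is always nonempty, yet collapses to a single endpoint of $[0,1]$ whenever $S$ halts, with the endpoint recording the output of $S$. Since any algorithm producing an allowable plan would then have to hand back that endpoint, composing it with the reading algorithm $F$ of Construction~\ref{construction6} would extend $S$ to a total algorithm, which is impossible.

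First I would write down the constraints. Take $U(n)$ to be $\max x$ subject to $s_n\cdot x\ge s_n$, $s_n\cdot x\ge 0$, and $0\le x\le 1$, where $s_n$ is the CRN of Construction~\ref{construction1}; the cost function is irrelevant and only feasibility matters. I would then check the three cases. If $S$ halts on $n$ with output $1$ then $s_n>0$, the first constraint forces $x\ge 1$, and together with $x\le 1$ the allowable set is the single point $\{1\}$. If $S$ halts with output $0$ then $s_n<0$, the second constraint forces $x\le 0$, and with $x\ge 0$ the allowable set is $\{0\}$. If $S$ never halts then $s_n=0$, both linear constraints read $0\ge 0$ and the allowable set is all of $[0,1]$. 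In every case the set is nonempty, so $U(n)$ is a legitimate instance for the theorem, even though no point in it can be exhibited uniformly in $n$.

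Finally I would run the reduction. Suppose an algorithm $\Phi$ returns an allowable plan $x_n$ of $U(n)$ for every $n$. Applying $F$ from Construction~\ref{construction6} with $E=[0,1]$, so that $E_0=[0,2/3]$ and $E_1=[1/3,1]$, one has $F(0)=0$ and $F(1)=1$; hence whenever $S$ halts on $n$ the forced singleton value $x_n$ is the matching endpoint and $F(\Phi(n))$ agrees with the output of $S$. Since $\Phi$ and $F$ are total, $n\mapsto F(\Phi(n))$ is a total algorithm extending $S$, contradicting the non-extendability of $S$, so no such $\Phi$ exists.

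I expect the delicate point to be not the reduction, which is routine once the instances are in place, but arranging constraints that are simultaneously always feasible and yet pin the allowable set to a single, output-revealing endpoint in the halting cases. The trick is that the two inequalities $s_n\cdot x\ge s_n$ and $s_n\cdot x\ge 0$ are each vacuous unless $s_n$ has a definite sign, at which point exactly one of them clamps $x$ to an endpoint of $[0,1]$. One should also flag the constructive subtlety raised in the paragraph preceding the theorem: the region is nonempty in each of the three cases, but the trichotomy on the sign of $s_n$ is undecidable, which is precisely why nonemptiness here does not yield inhabitability.
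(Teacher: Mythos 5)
Your proof is correct and follows essentially the same route as the paper: a feasible region that is all of $[0,1]$ when $S$ diverges but collapses to an output-revealing endpoint of $[0,1]$ when $S$ halts, which a hypothetical point-finding algorithm composed with the reading algorithm $F$ of Construction~\ref{construction6} would turn into an impossible total extension of $S$. The only difference is in the concrete constraints: you use the single CRN $s_n$ with the two inequalities $s_n\cdot x\ge s_n$ and $s_n\cdot x\ge 0$, whereas the paper introduces two auxiliary CRNs $a_n,b_n$ and the single equality $(a_n+b_n)\cdot x=a_n$; both give the same three-case analysis (and your sign convention even matches the output of $S$ directly, where the paper's is reversed).
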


\begin{proof}
We again use the partially defined algorithm $S$ that transforms integers to $0$ or $1$ and it is not extendible to the whole $\N.$ For each $n\in \N$ we define two sequences of rational numbers $a_n$ and $b_n.$ For each $k$ we put 
\begin{itemize}
\item $a_n(k)=b_n(k)=0$ if $S$ did not yet finish working on $n$ by step $k$;
\item $a_n(k)=2^{-m}$ and $b_n(k)=0$ if $S$ finished working on input $n$ by step $k$, produced $0$ and $m$ is the step number when this happened;
\item $a_n(k)=0$ and $b_n(k)=2^{-m}$ if $S$ finished working on input $n$ by step $k$, produced $1$ and $m$ is the step number when this happened.
\end{itemize}

For every $n$ the sequences $a_n(k)$ and $b_n(k)$ are converging in themselves at a speed of a geometric progression so they define two CRNs.

Let $D(n)$ be the set of allowable plans of a CLPP satisfying $(a_n+b_n)\cdot x =a_n$ and $0\leq x\leq 1.$

If $S$ does not terminate on input $n$ then $a_n=b_n=0$ and $D(n)$ contains every $CRN$ between $0$ and $1.$

If $S$ finishes working on $n$ and produces $0$ then $a_n>0$, $b_n=0$ and the only CRN contained in $D(n)$ is $x=1.$

If $S$ finishes working on $n$ and produces $1$ then $a_n=0$, $b_n>0$ and the only CRN contained in $D(n)$ is $x=0.$

Thus for every $n$ the set $D(n)$ is nonempty and there can not not exist an element of it. Assume that for every $n$ the set $D(n)$ can be inhibited that is there is an algorithm $G$ that given any $n\in \N$ constructs a point of $D(n).$ 

Consider an algorithm $F$ from Construction~\ref{construction6} and apply it to the interval $E=[0,1]$. The composition  of $G$ and $F$ would be an everywhere defined algorithm that extends $S$ to the whole of $\N$ but such an extension does not exist by our assumptions.

Thus the algorithm $G$ does not exist.
\end{proof} 

\begin{remark}
Theorem~\ref{theorem11} shows the situation when on one side there can not be an algorithm that for each $n\in \N$ constructs an element of the set $D(n)$ but at the same time the counterexample can not exist since the set $D(n)$ is nonempty.
\end{remark}

\section{Conclusions}
We have presented the proof of algorithmic undecidability of a number of basic properties of CLPP. The basis for this undecidability is the undecidability of the relation of equality and of the comparison of the constructive real valued parameters of such problems.

The Simplex Method (including the method of artificial basis) uses the operations of addition, subtraction, multiplication, division and comparison of the results of the operations. Its usage assume the algorithmic decidability of the comparison and equality. If the parameters of the CLPP are rational numbers or belong to the extension of the field of rational numbers with the decidable properties of equality and total order, then the method allows one to do the necessary computations and conclusions. We proved above that this is not possible for the general CLPP.

Note that for the relations of equality, strict and nonstrict inequality to be decidable it is sufficient that any one of them is decidable.

\section*{Acknowledgement}
The first author is grateful to N.~A.~Shanin for introducing him to the beautiful subject of Constructive Mathematics.

This work was partially supported by a grant from the Simons Foundation
($\# 513272$ to Vladimir Chernov).

\end{document}